\documentclass{article}




\usepackage[final]{neurips_2019}


\usepackage[utf8]{inputenc} 
\usepackage[T1]{fontenc}    
\usepackage{hyperref}       
\usepackage{url}            
\usepackage{booktabs}       
\usepackage{amsfonts}       
\usepackage{nicefrac}       
\usepackage{microtype}      
\usepackage{comment}
\usepackage{graphicx}
\usepackage{amsmath}
\usepackage{amsfonts}
\usepackage{amssymb}
\usepackage{amsthm}
\usepackage{color}

\theoremstyle{definition}
\newtheorem{definition}{Definition}
\newtheorem{assumption}{Assumption}

\theoremstyle{plain}
\newtheorem{theorem}{Theorem}

\newtheorem{proposition}{Proposition}
\newtheorem{lemma}{Lemma}
\newtheorem{probstat}{Problem}
\newtheorem*{probstat*}{Problem}

\DeclareMathOperator*{\sign}{sign}

\DeclareMathOperator*{\co}{co}
\DeclareMathOperator*{\cobar}{\overline{co}}






\title{Finite-Time Convergence of Continuous-Time Optimization Algorithms
via Differential Inclusions}

%

\author{%
  Orlando Romero \\
  Department of Industrial and Systems Engineering\\
  Rensselaer Polytechnic Institute\\
  Troy, NY, USA \\
  \texttt{rodrio2@rpi.edu} \\
   \And
   Mouhacine Benosman \\
   Mitsubishi Electrics Research Laboratories \\
   Cambridge, MA, USA \\
   \texttt{benosman@merl.com} \\
}

\begin{document}

\maketitle

\begin{abstract}
In this paper, we propose two discontinuous dynamical systems in continuous time with guaranteed prescribed finite-time local convergence to strict local minima of a given cost function. Our approach consists of exploiting a Lyapunov-based differential inequality for differential inclusions, which leads to finite-time stability and thus finite-time convergence with a provable bound on the settling time. In particular, for exact solutions to the aforementioned differential inequality, the settling-time bound is also exact, thus achieving prescribed finite-time convergence. We thus construct a class of discontinuous dynamical systems, of second order with respect to the cost function, that serve as continuous-time optimization algorithms with finite-time convergence and prescribed convergence time. Finally, we illustrate our results on the Rosenbrock function.
\end{abstract}

\section{Introduction}
In continuous-time optimization, an ordinary differential equation~(ODE), partial differential equation~(PDE), or differential inclusion is designed in terms of a given cost function, in such a way to lead the solutions to converge (forward in time) to an optimal value of the cost function. To achieve this, tools from Lyapunov stability theory are often employed, mainly because there already exists a rich body of work within the nonlinear systems and control theory community for this purpose. In particular, we seek \emph{asymptotically} Lyapunov stable gradient-based systems with an equilibrium (stationary point) at an isolated extremum of the given cost function, thus certifying local convergence. Naturally, \emph{global} asymptotic stability leads to global convergence, though such an analysis will typically require the cost function to be strongly convex everywhere.

For early work in this direction, see~\citep{botsaris1978,botsaris1978b}, \citep{zghier1981}, \citep{snyman1982,snyman1983}, and \citep{brown1989}. \cite{brockett88} and, subsequently,~\cite{helmke1994}, studied relationships between linear programming, ODEs, and general matrix theory.  Further,~\cite{schropp1995} and \cite{schropp2000} explored several aspects linking nonlinear dynamical systems to gradient-based optimization, including nonlinear constraints. \cite{Cortes2006} proposed two discontinuous normalized modifications of gradient flows to attain \emph{finite-time convergence}. Later,~\cite{Elia2011} proposed a control-theoretic perspective on centralized and distributed convex optimization. 

More recently, \cite{Su2014} derived a second-order ODE as the limit of Nesterov's accelerated gradient method, when the gradient step sizes vanish. This ODE is then used to study Nesterov's scheme from a new perspective, particularly in an larger effort to better understand acceleration without substantially increasing computational burden. 
Expanding upon the aforementioned idea, \cite{franca2018} derived a second-order ODE that models the continuous-time limit of the sequence of iterates generated by the alternating direction method of multipliers (ADMM).Then, the authors employ Lyapunov theory to analyze the stability at critical points of the dynamical systems and to obtain associated convergence rates. 

Later, \cite{Franca2019} analyze general non-smooth and linearly constrained optimization problems by deriving equivalent (at the limit) non-smooth dynamical systems related to variants of the relaxed and accelerated ADMM. Then, the authors employ Lyapunov theory to analyze the stability at critical points of the dynamical systems and to obtain associated convergence rates. Later, \cite{Franca2019} analyze general non-smooth and linearly constrained optimization problems by deriving equivalent (at the limit) non-smooth dynamical systems related to variants of the relaxed and accelerated ADMM. 

In the more traditional context of machine learning, only a few papers have adopted the approach of explicitly borrowing or connecting ideas from control and dynamical systems. For unsupervised learning,~\cite{PLUMBLEY199511} proposes Lyapunov stability theory as an approach to establish convergence of principal component algorithms. \cite{Pequito2011UnsupervisedLO} and~\cite{Aquilanti2019} propose continuous-time generalized expectation-maximization (EM) algorithms, based on mean-field games, for clustering of finite mixture models. \cite{romero2019} establish convergence of the EM algorithm, and a class of generalized EM algorithms denoted $\delta$-EM, via discrete-time Lyapunov stability theory. For supervised learning,~\cite{Liu2019} provide a review of deep learning from the perspective of control and dynamical systems, with a focus in optimal control. \cite{Zhu2018AnOC} and~\cite{Rahnama2019ConnectingLC} explore connections between control theory and adversarial machine learning.

\subsection*{Statement of Contribution}
In this work, we provide a Lyapunov-based tool to both check and construct continuous-time dynamical systems that are finite-time stable and thus lead to finite-time convergence of the candidate Lyapunov function (intended as a surrogate to a given cost function) to its minimum value. In particular, we first extend one of the existing Lyapunov-based inequality condition for finite-time convergence of the usual Lipschitz continuous dynamical systems, to the case of arbitrary differential inclusions. We then use this condition to construct a family of {\it discontinuous, second-order flows}, which guarantee local convergence to a local minimum, in {\it prescribed finite time}. One of the proposed families of continuous-time optimization algorithms is tested on a well-known optimization testcase, namely, the {\it Rosenbrock function}.

\section{Finite-Time Convergence in Optimization via Finite-Time Stability}
Consider some objective cost function $f:\mathbb{R}^n\to\mathbb{R}$ that we wish to minimize. In particular, let $x^\star\in\mathbb{R}^n$ be an arbitrary local minimum of $f$ that is unknown to us.  In continuous-time optimization, we typically proceed by designing a nonlinear state-space dynamical system
\begin{equation}
    \dot{x} = F(x),
    \label{eq:genericsystem1st1}
\end{equation}
or a time-varying one replacing $F(x)$ with $F(t,x)$, for which $F(x)$ can be computed without explicit knowledge of $x^\star$ and for which~\eqref{eq:genericsystem1st1} is certifiably asymptotically stable at $x^\star$. Ideally, computing $F(x)$ should be possible using only up to second-order information on $f$.

In this work, however, we seek dynamical systems for which~\eqref{eq:genericsystem1st1} is certifiably \emph{finite-time} stable at $x^\star$. As will be clear later, such systems need to be possibly discontinuous or non-Lipschitz, based on differential inclusions instead of ODEs. Our approach to achieve this objective is largely based on exploiting the Lyapunov-like differential inequality
\begin{equation}
    \dot{\mathcal{E}}(t) \leq -c\,\mathcal{E}(t)^\alpha,\quad \textnormal{a.e. } t\geq 0,
    \label{eq:ineqEfirst1}
\end{equation}
with constants $c>0$ and $\alpha < 1$, for absolutely continuous functions $\mathcal{E}$ such that $\mathcal{E}(0)>0$. Indeed, under the aforementioned conditions, $\mathcal{E}(t)\to 0$ will be reached in finite time $t\to t^\star\leq \frac{\mathcal{E}(0)^{1-\alpha}}{c(1-\alpha)}<\infty$. 

We now summarize the problem statement:
\begin{probstat}
\label{probstat1}
Given a sufficiently smooth cost function $f:\mathbb{R}^n\to\mathbb{R}$ with a sufficiently regular local minimizer $x^\star$, solve the following tasks:
\begin{enumerate}
    \item Design a sufficiently smooth\footnote{At least locally Lipschitz continuous and \emph{regular} (see Definition~\ref{def:regularfunction} of the supplementary material, Appendix~\ref{subsec:filippov}).} candidate Lyapunov function  $V$ for which is defined and positive definite near and w.r.t. $x^\star$\footnote{In other words, there exists some open neighborhood $\mathcal{D}$ of $x^\star$ such that $V$ is defined in $\mathcal{D}$ and satisfies $V(x^\star) = 0$ and $V(x)>0$ for every $x\in\mathcal{D}\setminus\{x^\star\}$.}.
\item Design a (possibly discontinuous) system\footnote{Right-hand side defined at least a.e., Lebesgue measurable, and locally essentially bounded.}~\eqref{eq:genericsystem1st1} such that $F(x)$ can be computed near $x=x^\star$ without explicit knowledge of $x^\star$, and, given any Filippov solution\footnote{See supplementary material, Appendix~\ref{subsec:filippov}.} $x(\cdot)$ of~\eqref{eq:genericsystem1st1} with \mbox{$x(0)=x_0\neq x^\star$}, the differential inequality~\eqref{eq:ineqEfirst1} is satisfied for $\mathcal{E}(t)\triangleq V(x(t))$.
\end{enumerate}
\end{probstat}

By following this strategy, we will therefore achieve (local and strong) \emph{finite-time stability}, and thus finite-time convergence. Furthermore, if $V(x_0)$ can be upper bounded, then $F$ can be readily tuned to achieve finite-time convergence under a \emph{prescribed} range for the settling time, or even with \emph{exact} prescribed settling time if $V(x_0)$ can be explicitly computed and~\eqref{eq:ineqEfirst1} holds exactly.

\section{A Family of Finite-Time Stable, Second-Order Optimization Flows}
We now propose a family of second-order optimization methods with finite-time convergence constructed using two \emph{gradient-based} Lyapunov functions, namely $V(x) = \|\nabla f(x)\|^2$ and $V(x) = \|\nabla f(x)\|_1$. First, we need to assume sufficient smoothness on the cost function.
\begin{assumption}
$f:\mathbb{R}^n\to\mathbb{R}$ is twice continuously differentiable and strongly convex in an open neighborhood $\mathcal{D}\subseteq\mathbb{R}^n$ of a stationary point $x^\star\in\mathbb{R}^n$.
\label{ass:C2Hessian1}
\end{assumption}

Since $\nabla V(x) = 2\nabla^2 f(x)\nabla f(x)$ for $V(x) = \|\nabla f(x)\|^2$ and $\nabla V(x) = \nabla^2 f(x)\sign(\nabla f(x))$ a.e. for $V(x) = \|\nabla f(x)\|_1$, we can readily design Filippov differential inclusions that are finite-time stable at $x^\star$. In particular, we may design such differential inclusions to achieve an \emph{exact} and \emph{prescribed} finite settling time, at the trade-off of requiring second-order information on $f$.

Given a symmetric and positive definite matrix $P\in\mathbb{R}^{n\times n}$ with SVD decomposition \mbox{$P = V\Sigma V^\top$}, $\Sigma = \textnormal{diag}(\lambda_1\ldots,\lambda_n)$, $\lambda_1,\ldots,\lambda_n>0$, we define \mbox{$P^r \triangleq V\Sigma^r V^\top$}, where $\Sigma^r \triangleq \textnormal{diag}\,(\lambda_1^r,\ldots,\lambda_n^r)$.

\begin{theorem}
Let $c>0$, $p\in [1,2)$, and $r\in\mathbb{R}$. Under Assumption~\ref{ass:C2Hessian1}, any maximal Filippov solution to the discontinuous second-order generalized Newton-like flows
\begin{equation}
    \dot{x} = -c\|\nabla f(x)\|^p
    \frac{[\nabla^2 f(x)]^r\nabla f(x)}{\nabla f(x)^\top[\nabla^2 f(x)]^{r+1}\nabla f(x)}
    \label{eq:GNF11}
\end{equation}
and
\begin{equation}
    \dot{x} = -c\|\nabla f(x)\|_1^{p-1}
    \frac{[\nabla^2 f(x)]^r\sign(\nabla f(x))}{\sign(\nabla f(x))^\top[\nabla^2 f(x)]^{r+1}\sign(\nabla f(x))}
    \label{eq:GNF22}
\end{equation}
with sufficiently small $\|x_0-x^\star\|>0$ (where $x_0=x(0)$) 
will converge in finite time to $x^\star$. Furthermore, their convergence times are given exactly by
\begin{equation}
    t^\star = \frac{\|\nabla f(x_0)\|^{2-p}}{c(2-p)}\qquad\quad t^\star = \frac{\|\nabla f(x_0)\|_1^p}{c\,p} 
    \label{eq:GNF12}
\end{equation}
for \eqref{eq:GNF11}-\eqref{eq:GNF22}, respectively, 
where $x_0 = x(0)$. In particular, given any compact and positively invariant subset $S\subset\mathcal{D}$, both flows converge in finite with the aforementioned settling time upper bounds (which can be tightened by replacing $\overline{\mathcal{D}}$ with $S$) for any $x_0\in S$. Furthermore, if $\mathcal{D}=\mathbb{R}^n$, then we have global finite-time convergnece, \emph{i.e.} finite-time convergence to any maximal Filippov solution~$x(\cdot)$ with arbitrary $x_0=x(0)\in\mathbb{R}^n$.
\end{theorem}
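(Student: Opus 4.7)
\medskip

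\noindent\textbf{Proof sketch (plan).}
The plan is to reduce each flow, via the direct computation of $\dot V$ along trajectories, to a scalar differential equation of the form $\dot{\mathcal{E}} = -c'\mathcal{E}^\alpha$ with $\alpha<1$, and then invoke the finite-time convergence result built on the Lyapunov inequality~\eqref{eq:ineqEfirst1} outlined earlier. For flow~\eqref{eq:GNF11} I would take $V(x) = \|\nabla f(x)\|^2$, and for flow~\eqref{eq:GNF22} I would take $V(x) = \|\nabla f(x)\|_1$. Under Assumption~\ref{ass:C2Hessian1}, strong convexity of $f$ in $\mathcal{D}$ implies that $\nabla^2 f(x)\succ 0$ throughout $\mathcal{D}$ and that $\nabla f$ vanishes only at $x^\star$ in $\mathcal{D}$; consequently $V$ is positive definite near $x^\star$, the denominators appearing in~\eqref{eq:GNF11}--\eqref{eq:GNF22} are strictly positive on $\mathcal{D}\setminus\{x^\star\}$, and the matrix powers $[\nabla^2 f(x)]^r$ are well defined via the spectral convention introduced before the theorem.

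The computational core is the chain-rule step. For~\eqref{eq:GNF11}, $\nabla V(x) = 2\nabla^2 f(x)\nabla f(x)$, so along a Filippov solution
\begin{equation*}
\dot V = \nabla V(x)^\top \dot x = -2c\|\nabla f(x)\|^{p}\,\frac{\nabla f(x)^\top [\nabla^2 f(x)]^{r+1}\nabla f(x)}{\nabla f(x)^\top [\nabla^2 f(x)]^{r+1}\nabla f(x)} = -2c\,V^{p/2}.
\end{equation*}
For~\eqref{eq:GNF22}, the same cancellation occurs after identifying a Clarke generalized gradient of $V$; since $f$ is $C^2$ and $\|\cdot\|_1$ is regular in the sense of Clarke, one has $\partial V(x)\ni \nabla^2 f(x)\sign(\nabla f(x))$ at every point where no component of $\nabla f(x)$ vanishes (the remaining set has measure zero along absolutely continuous trajectories), yielding $\dot V = -c\,V^{p-1}$ a.e. In both cases the scalar ODE integrates in closed form, giving exactly the settling-time expressions in~\eqref{eq:GNF12}, and the extension of the finite-time inequality to Filippov solutions (the tool established earlier in the paper) certifies that $V(x(t))\to 0$, hence $x(t)\to x^\star$, in finite time.

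It remains to argue that a maximal Filippov solution starting sufficiently close to $x^\star$ stays in $\mathcal{D}$ long enough to conclude. I would choose a sublevel set $\{x:V(x)\le \eta\}$ contained in $\mathcal{D}$ (which exists by continuity and positive definiteness of $V$ together with the fact that $\nabla f$ has an isolated zero at $x^\star$ by strong convexity), observe that this sublevel set is positively invariant since $\dot V\le 0$, and conclude that any $x_0$ in it generates a trajectory that remains in $\mathcal{D}$ up to the settling time. The same argument extends verbatim to any compact positively invariant $S\subset \mathcal{D}$ and, when $\mathcal{D}=\mathbb{R}^n$, yields global finite-time convergence.

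The main obstacle I expect is the nonsmooth chain-rule step for~\eqref{eq:GNF22} and, more broadly, ensuring that $t\mapsto V(x(t))$ is absolutely continuous and that its derivative a.e. equals the inner product of the Filippov set-valued map with an element of $\partial V$ in the regular (Clarke) sense. This is where the regularity hypothesis on $V$ flagged in the statement of Problem~\ref{probstat1} plays its role, and it is the place where one must cite the nonsmooth calculus developed in the supplementary material rather than a classical chain rule. Once this step is secured, the remainder of the argument is algebraic cancellation plus a closed-form integration.
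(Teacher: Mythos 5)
Your proposal follows essentially the same route as the paper's own proof: the same Lyapunov functions $V(x)=\|\nabla f(x)\|^2$ and $V(x)=\|\nabla f(x)\|_1$, the same algebraic cancellation yielding $\dot V=-2c\,V^{p/2}$ and $\dot V=-c\,V^{p-1}$ a.e., the same appeal to the nonsmooth set-valued Lie derivative machinery (Lemma~\ref{lemma:dVdtisinVdot} and Theorem~\ref{thm:FTS}) for the $\ell_1$ case, and the same sublevel-set argument for positive invariance. The only step the paper makes explicit that you leave implicit is the verification of Assumption~\ref{ass:existenceFilippov} (measurability and local essential boundedness of the right-hand side, via the bound $\|F(x)\|\lesssim \|\nabla f(x)\|^{p-1}$ and the zero-measure set where components of $\nabla f$ vanish), but this is a routine addition to your outline.
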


\begin{proof}
See supplementary material, appendix~\ref{subsec:proofthm1}.
\end{proof}

\section{Numerical Experiment: Rosenbrock Function}
We will now test one of our proposed flows on the Rosenbrock function $f:\mathbb{R}^2\to\mathbb{R}$, given by
\begin{equation}
f(x_1,x_2) = (a-x_1)^2 + b(x_2-x_1^2)^2,
\end{equation}
with parameters $a,b\in\mathbb{R}$. This function is nonlinear and non-convex, but smooth. It possesses exactly one stationary point $(x_1^\star,x_2^\star) = (a,a^2)$ for $b\geq 0$, which is a strict global minimum for $b > 0$. If $b<0$, then $(x_1^\star,x_2^\star)$ is a saddle point. Finally, if  $b=0$, then $\{(a,x_2): x_2\in\mathbb{R}\}$ are the stationary points of $f$, and they are all non-strict global minima.

\begin{figure}[!ht]
    \centering
    \hspace{-1cm}
    \includegraphics[width=10cm,height=6cm] {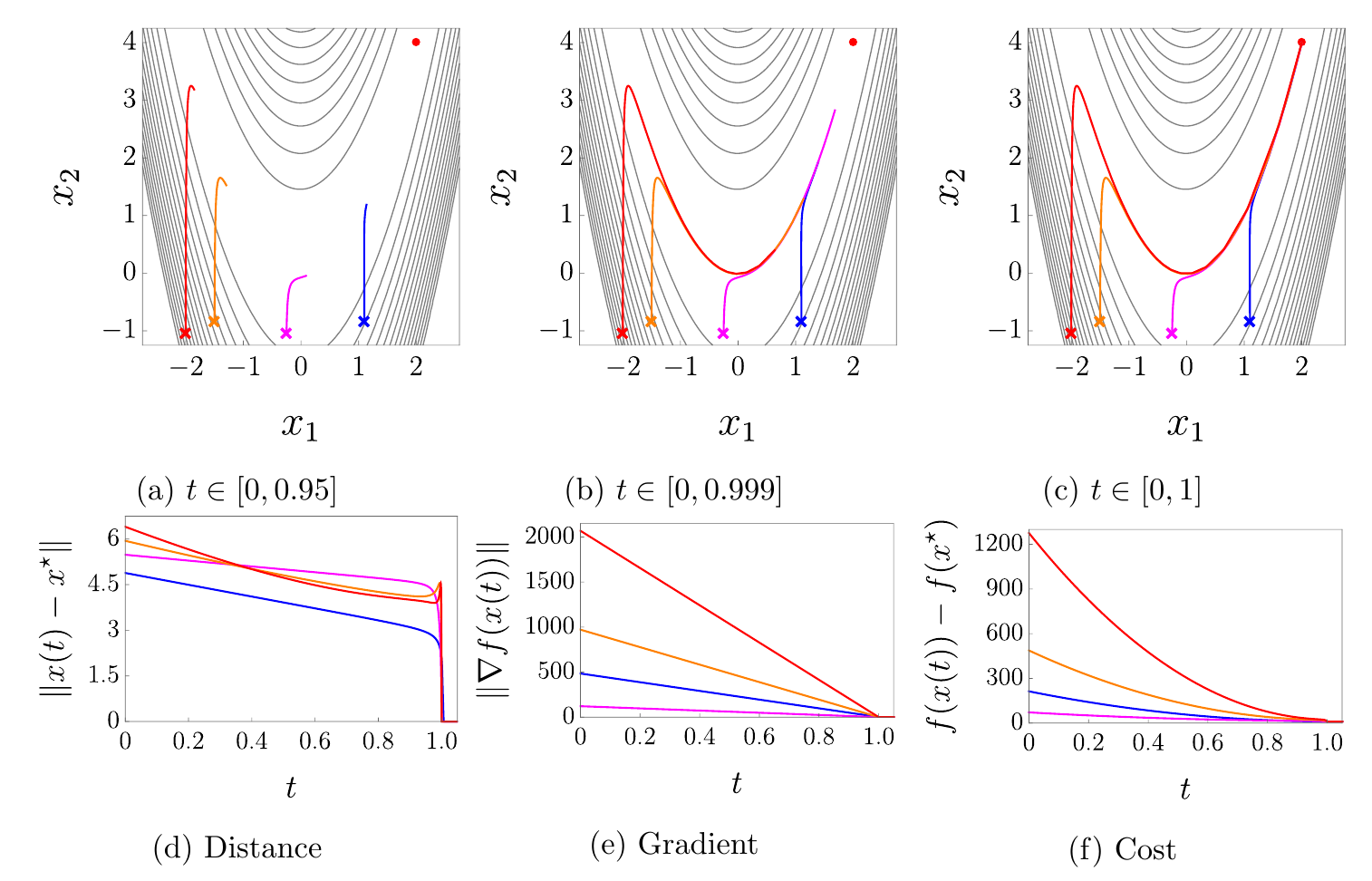}
    \caption{Trajectories of the proposed flow~\eqref{eq:GNF11} with $(c,p,r)=(\|\nabla f(x_0)\|,1,-1)$ and four different initial conditions $x_0\in\mathbb{R}^2$, for the Rosenbrock function with parameters $(a,b)=(2,50)$, which has a unique minimum $x^\star = (a,a^2) = (2,4)$.}
    \label{fig:test1}
\end{figure}

As we can see in Figure~\ref{fig:test1}, this flow converges correctly to the minimum $(a,a^2) = (2,4)$ from all the tested initial conditions with an exact prescribed settling time $t^\star = 1$. It should be noted that at any given point in the trajectory $x(\cdot)$, the functions $t\mapsto \|x(t)-x^\star\|$ and $t\mapsto |f(x(t)) - f(x^\star)|$ are not guaranteed to decrease or remain constant, indeed only $t\mapsto \|\nabla f(x(t))\|$ can be guaranteed to do so, which explains the increase in Figure~\ref{fig:test1}-(d) that could never have occurred in Figure~\ref{fig:test1}-(e).

\section{Conclusions and Future Work}\label{sec:conclusion}
We have introduced a new family of second-order flows for continuous-time optimization. The main characteristic of the proposed flows is their \emph{finite-time} convergence guarantees. Furthermore, they are designed in such a way that the finite convergence time can be prescribed by the user. To be able to analyze these discontinuous flows, we resorted to establishing finite-time stability. In order to do this, we first extended an existing sufficient condition  for finite-time stability through a \mbox{Lyapunov-based} inequality, in the case of smooth dynamics, to the case of non-smooth dynamics modeled by differential inclusions. One of the proposed families was tested on a well-known optimization benchmark -- the Rosenbrock function.

While the obtained results are encouraging, it should be clear that currently available numerical solvers for our proposed flows do not translate into competitive \emph{iterative} optimization algorithms. Deciding how to best discretize a given continuous-time optimization algorithm (i.e. as given by an ODE, PDE, or differential inclusion), or how to compare two such continuous-time algorithms in terms of the performance of corresponding iterative schemes suitable for digital computers, largely remains an open problem, but nonetheless a very active topic of research as of right now.

For the aforementioned reasons, future work will be dedicated to studying discretization of our proposed flows (and other finite-time stable flows) that will hopefully lead to either accelerated schemes when compared to currently available methods, or to a better understanding of the intrinsic boundaries of acceleration strategies. Furthermore, we will extend our results to constrained optimization; gradient-free optimization; time-varying optimization; and extremum-seeking control where the derivative of the cost function is estimated from direct measurements of the cost.

\newpage

\subsubsection*{Acknowledgments}
The majority of the research that led to this work was conducted when the first author was doing an internship at Mitsubishi Electrics Research Laboratories (MERL) in the summer of 2019, under the supervision of the second author.

\bibliography{main}
\bibliographystyle{aaai}
\newpage

\appendix
\section*{Supplementary Material}
\section{Discontinuous Systems and Filippov Differential Inclusions}
\label{subsec:filippov}
Recall that for an initial value problem (IVP)
\begin{subequations}
\begin{align}
\dot{x}(t) &= F(x(t)) \label{eq:genericsystem} \\
x(0) &= x_0
\end{align}
\label{eq:IVP}
\end{subequations}
with $F:\mathbb{R}^n\to\mathbb{R}^n$, the typical way to check for existence of solutions is by establishing continuity of $F$. Likewise, to establish unicity of solution, we typically seek Lipschitz continuity. When $F$ is discontinuous, but nonetheless Lebesgue measurable and locally essentially uniformly bounded, we may understand~\eqref{eq:genericsystem} as the Filippov differential inclusion
\begin{equation}
\dot{x}(t) \in \mathcal{K}[F](x(t)),
\label{eq:filippov}
\end{equation}
where $\mathcal{K}[F]:\mathbb{R}^n\rightrightarrows\mathbb{R}^n$ denotes the Filippov set-valued map given by
\begin{equation}
\mathcal{K}[F](x) \triangleq \bigcap_{\delta > 0}\bigcap_{\mu(S)=0}\cobar F(B_\delta(x)\setminus S),
\label{eq:filippovsetvaluedmap}
\end{equation}
where $\mu$ denotes the usual Lebesgue measure and $\cobar$ the convex closure, \emph{i.e.} closure of the convex hull $\co$. For more details, se~\citep{Paden1987}, from which this section is based on.
\begin{assumption}
$F:\mathbb{R}^n\to\mathbb{R}^n$ is defined a.e. and is Lebesgue measurable in a \mbox{non-empty} open region $U\subset \mathbb{R}^n$. Further, $F$ is locally essentially bounded, \emph{i.e.}, for every point $x\in\mathbb{R}^n$, $F$ is bounded a.e. in some bounded neighborhood of $x$.
\label{ass:existenceFilippov}
\end{assumption}

\begin{definition}[Filippov]
Under Assumption~\ref{ass:existenceFilippov}, we say that $x:[0,\tau)\to\mathbb{R}^n$ with $0<\tau\leq\infty$ is a \emph{Filippov solution} to \eqref{eq:IVP} if $x(\cdot)$ is absolutely continuous, $x(0) = x_0$, and \eqref{eq:filippov} holds a.e. in every compact subset of $[0,\tau)$. Furthermore, we say that $x:[0,\tau)\to\mathbb{R}^n$ is a \emph{maximal} Filippov solution if no other Filippov solution $x'(\cdot)$ exists with $x = x'|_{[0,\tau)}$.
\end{definition}

For a comprehensive overview of discontinuous systems, including sufficient conditions for existence (Proposition 3) and uniqueness (Propositions 4 and 5) of Filippov solutions, see~\citep{Cortes2008}. In particular, it can be established that Filippov solutions to~\eqref{eq:IVP} exist, provided that Assumption~\ref{ass:existenceFilippov} holds.


\begin{proposition}[\cite{Paden1987}, Theorem 1]
Under Assumption~\ref{ass:existenceFilippov},~\eqref{eq:filippovsetvaluedmap} can be computed as
\begin{equation}
\mathcal{K}[F](x) = \left\{\lim_{k\to\infty} F(x_k): x_k\not\in\mathcal{N}_F\cup S, x_k\to x\right\}
\end{equation}
for some (Lebesgue) zero-measure set $\mathcal{N}_F\subset\mathbb{R}^n$ and any other zero-measure set $S\subset\mathbb{R}^n$. In particular, if $F$ is continuous at a fixed $x$, then \mbox{$\mathcal{K}[F](x) = \{F(x)\}$}.
\end{proposition}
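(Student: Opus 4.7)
The plan is to prove the two set inclusions separately, after explicitly choosing the null set $\mathcal{N}_F$, and then to deduce the continuity corollary as an immediate consequence. I would take $\mathcal{N}_F$ to be the (Lebesgue-null) set of points in $U$ where $F$ is either undefined or fails to be essentially bounded on every neighborhood; its nullity is guaranteed by Assumption~\ref{ass:existenceFilippov}. For brevity, write $L(x)$ for the set of limits $\lim_{k\to\infty} F(x_k)$ appearing on the right-hand side of the claimed identity.

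For the inclusion $L(x)\subseteq\mathcal{K}[F](x)$, which is the easy half, I would fix $v=\lim_k F(x_k)$ with $x_k\to x$ and $x_k\notin\mathcal{N}_F\cup S$, then take an arbitrary $\delta>0$ and an arbitrary null set $S'$. Setting $S''=S\cup S'$ (still null, hence an admissible choice in the defining intersection \eqref{eq:filippovsetvaluedmap}), the tail of $(x_k)$ eventually lies in $B_\delta(x)\setminus S''$, and each $F(x_k)$ is well defined because $x_k\notin\mathcal{N}_F$. Therefore $v$ lies in the closure of $F(B_\delta(x)\setminus S')$, and a fortiori in $\cobar F(B_\delta(x)\setminus S')$. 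Intersecting over $\delta>0$ and null $S'$ places $v$ in $\mathcal{K}[F](x)$.

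For the reverse inclusion $\mathcal{K}[F](x)\subseteq L(x)$, which I expect to be the main obstacle, the key observation is that $\mathcal{N}_F$ is itself an admissible excised null set, so
\[
\mathcal{K}[F](x)=\bigcap_{\delta>0}\cobar F\bigl(B_\delta(x)\setminus(\mathcal{N}_F\cup S)\bigr).
\]
For small enough $\delta$, essential boundedness of $F$ off $\mathcal{N}_F$ makes the image bounded and its convex closure compact. The standard measure-theoretic argument (as in Paden--Sastry) identifies this compact convex set with the closed convex hull of cluster values of $F$ at $x$ along sequences in $B_\delta(x)\setminus(\mathcal{N}_F\cup S)$. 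Running a diagonal extraction across a nested sequence $\delta_n\downarrow 0$ then realizes each $v\in\mathcal{K}[F](x)$ as the limit of $F(x_k)$ along a single admissible sequence $x_k\to x$, placing $v\in L(x)$.

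Finally, the continuity corollary follows immediately: if $F$ is continuous at a fixed $x$, then $F$ is defined on a full neighborhood of $x$, admissible sequences $x_k\to x$ exist because the complement of the null set $\mathcal{N}_F\cup S$ is dense near $x$, and $F(x_k)\to F(x)$ along every such sequence. Hence $L(x)=\{F(x)\}$, and the equality just established yields $\mathcal{K}[F](x)=\{F(x)\}$. The crux of the whole argument is thus the $\subseteq$ inclusion, specifically the measure-theoretic passage from the abstract convex closure in \eqref{eq:filippovsetvaluedmap} to concrete cluster values along admissible sequences — routine but the only step whose details require genuine care.
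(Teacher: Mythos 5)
First, note that the paper does not prove this proposition at all: it is imported verbatim (as a citation of Theorem~1 of Paden and Sastry, 1987), so there is no in-paper argument to compare yours against. Judged on its own terms, your attempt has two genuine gaps, and they sit exactly where you locate the ``easy'' parts. The inclusion $L(x)\subseteq\mathcal{K}[F](x)$ is not easy, and your argument for it is broken at the step ``the tail of $(x_k)$ eventually lies in $B_\delta(x)\setminus S''$'': the hypothesis only gives $x_k\notin\mathcal{N}_F\cup S$, and nothing prevents the entire sequence from lying inside the new null set $S'$, so you cannot conclude $v\in\cobar F(B_\delta(x)\setminus S')$. Worse, your choice of $\mathcal{N}_F$ (points where $F$ is undefined or not locally essentially bounded) makes the inclusion false: take $F\equiv 0$ off a countable dense set $D$ and $F\equiv 1$ on $D$; then $\mathcal{K}[F](x)=\{0\}$ everywhere, yet $1\in L(x)$ since one may choose $x_k\in D$, $x_k\to x$. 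The whole content of the theorem is the construction of a null set $\mathcal{N}_F$ that excises precisely those points whose $F$-values are not ``essential'' (a Lusin/density-point argument), so that $L(x)$ becomes insensitive to further null-set excisions; this cannot be relegated to a one-line definition.

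Second, the reverse inclusion as you describe it would prove something false. $\mathcal{K}[F](x)$ is convex by construction, while the set of sequential limits $L(x)$ generally is not: for $F=\sign$ on $\mathbb{R}$ at $x=0$ one has $L(0)=\{-1,+1\}$ but $\mathcal{K}[F](0)=[-1,1]$, and no diagonal extraction realizes $0$ as $\lim_k\sign(x_k)$. Your own intermediate step correctly identifies the compact convex set with the \emph{closed convex hull} of cluster values, but the subsequent claim that every element of that hull is attained as a limit along a single admissible sequence is exactly what fails. The resolution is that the correct statement of Paden--Sastry's Theorem~1 carries a $\cobar$ on the right-hand side, which the paper's transcription of the proposition silently drops; as literally stated, the identity (and hence any proof of it) cannot hold. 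The continuity corollary is unaffected, since the convex closure of a singleton is the singleton itself.
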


For instance, for a gradient flow, we have \mbox{$\mathcal{K}[-\nabla f](x) =\{-\nabla f(x)\}$} for every $x\in\mathbb{R}^n$, provided that $f$ is continuously differentiable. Furthermore, if $f$ is only locally  Lipschitz continuous and regular (see Definition~\ref{def:regularfunction} of Appendix~\ref{subsec:FTSsemiS}), then $\mathcal{K}[-\nabla f](x) = -\partial f(x)$, where
\begin{equation}
    \partial f(x) \triangleq \left\{\lim_{k\to\infty} \nabla f(x_k): x_k\not\in S\cup\mathcal{N}_f, x_k\to x\right\}
\end{equation}
denotes Clarke's generalized gradient~\citep{Clarke1981} of $f$, with $S$ being any zero-measure set and $\mathcal{N}_f$ the zero-measure set over which $f$ is not differentiable.

\section{Finite-Time Stability of Differential Inclusions}
\label{subsec:FTSsemiS}

Consider a general differential inclusion (see \citep{Bacciotti1999} for more details)
\begin{equation}
\dot{x}(t) \in K(x(t))
\label{eq:differentialinclusion}%
\end{equation}
where $K:\mathbb{R}^n\rightrightarrows\mathbb{R}^n$ is a set-valued map.

\begin{assumption}
$K:\mathbb{R}^n\rightrightarrows\mathbb{R}^n$ has nonempty, compact, and convex values, and is \mbox{\emph{upper semi-continuous}}.
\label{ass:K}
\end{assumption}

\cite{Filippov1988} proved that, under Assumption~\ref{ass:existenceFilippov}, the Filippov set-valued map $K = \mathcal{K}[F]$ satisfies the conditions of Assumption~\ref{ass:K}.

Similarly to the previous case of Filippov solutions, we say that $x: [0,\tau)\to\mathbb{R}^n$ with $0<\tau\leq\infty$ is a \mbox{\emph{Carath\'{e}odory solution}} to~\eqref{eq:differentialinclusion} if $x(\cdot)$ is absolutely continuous and~\eqref{eq:differentialinclusion} is satisfied a.e. in every compact subset of $[0,\tau)$. Furthermore, we say that $x(\cdot)$ is a \emph{maximal} Carath\'{e}odory solution if no other Carath\'{e}odory solution $x'(\cdot)$ exists with $x = x'|_{[0,\tau)}$.

We say that $x^\star\in\mathbb{R}^n$ is an \emph{equilibrium} of~\eqref{eq:differentialinclusion} if $x(t) \equiv x^\star$ on some small enough non-degenerate interval is a Carath\'{e}odory solution to~\eqref{eq:differentialinclusion}. In other words, if and only if $0\in F(x^\star)$. 
We say that~\eqref{eq:differentialinclusion} is \emph{(Lyapunov) stable} at $x^\star\in\mathbb{R}^n$ if, for every $\varepsilon > 0$, there exists some $\delta > 0$ such that, for every maximal Carath\'{e}odory solution $x(\cdot)$ of~\eqref{eq:differentialinclusion}, we have $\|x_0 - x^\star\| < \delta \implies \|x(t) - x^\star\| < \varepsilon$ for every $t\geq 0$ in the interval where $x(\cdot)$ is defined. Note that, under Assumption~\ref{ass:K}, if~\eqref{eq:differentialinclusion} is stable at $x^\star$, then $x^\star$ is an equilibrium of~\eqref{eq:differentialinclusion}~\citep{Bacciotti1999}. Furthermore, we say that~\eqref{eq:differentialinclusion} is \emph{(locally and strongly) asymptotically stable} at $x^\star\in\mathbb{R}^n$ if is stable at $x^\star$ and there exists some $\delta > 0$ such that, for every maximal Carath\'{e}odory solution $x:[0,\tau)\to\mathbb{R}^n$ of~\eqref{eq:differentialinclusion}, if $\|x_0 - x^\star\| < \delta$ then $x(t)\to x^\star$ as $t\to\tau$. Finally,~\eqref{eq:differentialinclusion} is \emph{(locally and strongly) finite-time stable} at $x^\star$ if it is asymptotically stable and there exists some $\delta>0$ and $T:B_\delta(x^\star)\to [0,\infty)$ such that, for every maximal Carath\'{e}odory solution $x(\cdot)$ of~\eqref{eq:differentialinclusion} with $x_0\in B_\delta(x^\star)$, we have $\lim_{t\to T(x_0)}x(t) = x^\star$.

We will now construct a Lyapunov-based criterion adapted from the literature of finite-time stability of Lipschitz continuous systems. To do this, we first adapt Lemma~1 in~\citep{Benosman2009} for absolutely continuous functions and non-positive exponents.

\begin{lemma}
Let $\mathcal{E}(\cdot)$ be an absolutely continuous function satisfying the differential inequality
\begin{equation}
    \dot{\mathcal{E}}(t) \leq -c\,\mathcal{E}(t)^\alpha
    \label{eq:Edotineqlemma}
\end{equation}
a.e. in $t\geq 0$, with $c,\mathcal{E}(0)>0$ and $\alpha < 1$. Then, there exists some $t^\star > 0$ such that $\mathcal{E}(t)>0$ for $t\in [0,t^\star)$ and $\mathcal{E}(t^\star) = 0$. Furthermore, $t^\star > 0$ can be bounded by
\begin{equation}
    t^\star \leq \frac{\mathcal{E}(0)^{1-\alpha}}{c(1-\alpha)},
    \label{eq:tstarboundlemma}
\end{equation}
and this bound is exact provided~\eqref{eq:Edotineqlemma} is exact as well. Finally, if~\eqref{eq:Edotineqlemma} is exact and $\alpha\geq 1$, then $t^\star = \infty$ with $\mathcal{E}(\infty) \triangleq \lim_{t\to\infty}\mathcal{E}(t)$.
\label{lemma:Edotineq}
\end{lemma}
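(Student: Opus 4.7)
The plan is to linearize the nonlinear differential inequality via the substitution $u(t) \triangleq \mathcal{E}(t)^{1-\alpha}$, integrate to obtain an explicit upper bound, and conclude that $\mathcal{E}$ must hit zero in finite time with the stated bound on settling time. The exactness claim will follow by tracking equality through each step when $\dot{\mathcal{E}} = -c\mathcal{E}^\alpha$ holds, while the $\alpha \geq 1$ statement is handled by a direct integration of the resulting ODE.

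Concretely, I would first set $t^\star \triangleq \inf\{t \geq 0 : \mathcal{E}(t) \leq 0\} \in (0,\infty]$, which is strictly positive by continuity of $\mathcal{E}$ (inherited from absolute continuity) together with $\mathcal{E}(0)>0$, so that $\mathcal{E}(t) > 0$ on $[0,t^\star)$. For any $T \in (0, t^\star)$, continuity yields $\inf_{[0,T]} \mathcal{E} > 0$, so the map $g(s) \triangleq s^{1-\alpha}$ is $C^1$ on the range of $\mathcal{E}|_{[0,T]}$; the chain rule for absolutely continuous compositions then implies that $u = g \circ \mathcal{E}$ is absolutely continuous on $[0,T]$ with
\begin{equation*}
\dot{u}(t) \;=\; (1-\alpha)\,\mathcal{E}(t)^{-\alpha}\,\dot{\mathcal{E}}(t) \;\leq\; -c(1-\alpha) \qquad \text{a.e.\ on } [0,T],
\end{equation*}
using $\mathcal{E}(t)^{-\alpha}>0$ and the hypothesis. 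Integrating yields $u(t) \leq u(0) - c(1-\alpha)t$ on $[0,T]$, and letting $T \uparrow t^\star$ extends this to $[0,t^\star)$. Positivity of $u$ on that interval forces $t^\star \leq \mathcal{E}(0)^{1-\alpha}/[c(1-\alpha)] < \infty$, and continuity of $\mathcal{E}$ then gives $\mathcal{E}(t^\star) = 0$. Replacing each inequality by equality under the hypothesis of an exact ODE yields a matching lower bound and hence the claimed exact formula.

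For the final statement, when $\alpha \geq 1$ and $\dot{\mathcal{E}} = -c\mathcal{E}^\alpha$ holds as equality, one obtains $\mathcal{E}(t) = \mathcal{E}(0)e^{-ct}$ if $\alpha = 1$ and $\mathcal{E}(t) = [\mathcal{E}(0)^{1-\alpha} + c(\alpha-1)t]^{1/(1-\alpha)}$ if $\alpha > 1$, neither of which reaches zero in finite time, so $t^\star = \infty$. The main technical obstacle is the chain rule for $g \circ \mathcal{E}$: when $0 < \alpha < 1$, $g(s) = s^{1-\alpha}$ fails to be Lipschitz at $0$, so applying it naively on a neighborhood of $t^\star$ would be illegitimate. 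Restricting to compact subintervals $[0,T] \subset [0, t^\star)$ on which $\mathcal{E}$ is bounded away from $0$ is precisely the device that sidesteps this pathology, and is the sole place in the argument where absolute continuity of $\mathcal{E}$ (rather than mere a.e.\ differentiability) is genuinely invoked.
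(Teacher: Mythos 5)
Your proof is correct and follows essentially the same route as the paper's: defining $t^\star$ as the first time $\mathcal{E}$ can vanish, rewriting the inequality as $\frac{d}{dt}\bigl[\mathcal{E}(t)^{1-\alpha}/(1-\alpha)\bigr]\leq -c$, integrating, and reading off the settling-time bound, with exactness tracked through the same chain of inequalities. The only differences are that you justify the chain rule more carefully (working on compact subintervals where $\mathcal{E}$ is bounded away from zero, since $s\mapsto s^{1-\alpha}$ is not Lipschitz at the origin for $0<\alpha<1$) and that you treat $\alpha=1$ separately via the exponential solution rather than the power-law formula, both of which are refinements of, not departures from, the paper's argument.
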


\begin{proof}
Suppose that $\mathcal{E}(t)>0$ for every $t\in [0,T]$ with $T > 0$. Let $t^\star$ be the supremum of all such $T$'s, thus satisfying $\mathcal{E}(t) > 0$ for every $t\in [0,t^\star)$. We will now investigate $\mathcal{E}(t^\star)$. First, by continuity of $\mathcal{E}$, it follows that $\mathcal{E}(t^\star) \geq 0$. Now, by rewriting
\begin{equation}
    \dot{\mathcal{E}}(t) \leq -c\,\mathcal{E}(t)^\alpha \iff \frac{d}{dt}\left[\frac{\mathcal{E}(t)^{1-\alpha}}{1-\alpha}\right] \leq -c,
\end{equation}
a.e. in $t\in [0,t^\star)$, we can thus integrate to obtain
\begin{equation}
    \frac{\mathcal{E}(t)^{1-\alpha}}{1-\alpha} - \frac{\mathcal{E}(0)^{1-\alpha}}{1-\alpha} \leq -c\,t,
\end{equation}
everywhere in $t\in [0,t^\star)$, which in turn turn leads to
\begin{equation}
    \mathcal{E}(t) \leq [\mathcal{E}(0)^{1-\alpha} - c(1-\alpha)t]^{1-\alpha}
    \label{eq:Vdotinesol}
\end{equation}
and
\begin{equation}
    t \leq \frac{\mathcal{E}(0)^{1-\alpha} - \mathcal{E}(t)^{1-\alpha}}{c(1-\alpha)} \leq \frac{\mathcal{E}(0)^{1-\alpha}}{c(1-\alpha)},
    \label{eq:tboundsol}
\end{equation}
where the last inequality follows from $\mathcal{E}(t)>0$ for every $t\in [0,t^\star)$. Taking the supremum in~\eqref{eq:tboundsol} then leads to the upper bound~\eqref{eq:tstarboundlemma}. Finally, we conclude that $\mathcal{E}(t^\star) = 0$, since $\mathcal{E}(t^\star) > 0$ is impossible given that it would mean, due to continuity of $\mathcal{E}$, that there exists some $T>t^\star$ such that $\mathcal{E}(t)>0$ for every $t\in [0,T]$, thus contradicting the construction of $t^\star$. 

Finally, notice that if $\mathcal{E}$ satisfies~\eqref{eq:Edotineqlemma} exactly, then~\eqref{eq:Vdotinesol} and the first inequality in~\eqref{eq:tboundsol} are both exact as well. The exactness of the bound~\eqref{eq:tstarboundlemma} thus follows immediately. Furthermore, notice that if~$\alpha\geq 1$, and $\mathcal{E}$ is an exact solution to~\eqref{eq:Edotineqlemma}, \emph{i.e.} $\mathcal{E}(t) = [\mathcal{E}(0)^{1-\alpha} - c(1-\alpha)t]^{1-\alpha}$, then clearly $\mathcal{E}(t)>0$ for every $t\geq 0$ and $\mathcal{E}(t)\to 0$ as $t\to \infty$.
\end{proof}

In their Proposition 2.8, \cite{Cortes2005} proposed a Lyapunov-based criterion to establish finite-time stability of discontinuous systems, which fundamentally boils down to Lemma~\ref{lemma:Edotineq} with exponent $\alpha=0$. This proposition was in turn based on Theorem 2 of~\cite{Paden1987}. \cite{Cortes2006} later proposed a second-order Lyapunov criterion, which fundamentally boils down to $\mathcal{E}(t) = V(x(t))$ being strongly convex. Finally, Corollary 3.1 of~\cite{Hui2009} generalized the aforementioned Proposition 2.8 of~\cite{Cortes2005} to establish semistability. Indeed, these two results coincide for isolated equilibria.

We now present a novel result that generalizes the aforementioned first-order Lyapunov-based results, by exploiting our Lemma~\ref{lemma:Edotineq}. More precisely, given a Laypunov candidate function $V(\cdot)$, the objective is to set $\mathcal{E}(t) = V(x(t))$ and check that the conditions of Lemma~\ref{lemma:Edotineq} hold. To do this, and assuming $V$ to be locally Lipschitz continuous, we first borrow and adapt from~\cite{Bacciotti1999} the definition of \emph{set-valued time derivative} of $V:\mathcal{D}\to\mathbb{R}$ w.r.t. the differential inclusion~\eqref{eq:differentialinclusion}, given by
\begin{equation}
    \dot{V}(x) \triangleq \{a\in\mathbb{R}: \exists v\in K(x) \textnormal{ s.t. } a = p\cdot v,\, \forall p\in\partial V(x)\},
\end{equation}
for each $x\in\mathcal{D}$. Notice that, under Assumption~\ref{ass:K} For Filippov differential inclusions, we have $K = \mathcal{K}[F]$, and the set-valued time derivative of $V$ thus coincides with with the set-valued Lie derivative $\mathcal{L}_F V(\cdot)$. Indeed, more generally $\dot{V}$ could be seen as a set-valued Lie derivative $\dot{V} = \mathcal{L}_K V$ w.r.t. the set-valued map $K$.

\begin{definition}
\label{def:regularfunction}
We say that $V(\cdot)$ is said to be \emph{regular} if every directional derivative, given by
\begin{equation}
    V'(x;v) \triangleq \lim_{h\to 0}\frac{V(x+h\,v) - V(x)}{h},
\end{equation}
exists and is equal to
\begin{equation}
    V^\circ(x;v) \triangleq \limsup_{x'\to x\, h\to 0^+}\frac{V(x'+h\,v) - V(x')}{h},
\end{equation}
known as \emph{Clarke's upper generalized derivative}~\citep{Clarke1981}.
\end{definition}

\begin{assumption}
$V$ is locally Lipscthiz continuous and regular.
\label{ass:Vregularity}
\end{assumption}

In practice, regularity is a fairly mild and easy to guarantee condition. For instance, it would suffice that $V$ is convex or continuously differentiable to ensure that it is Lipschitz and regular. 

We are now equipped to formally establish the correspondence between the set-valued time-derivative of $V$ and the derivative of the energy function $\mathcal{E}(t) = V(x(t))$ associated with an arbitrary Carath\'{e}odory solution $x(\cdot)$ to the differential inclusion~\eqref{eq:differentialinclusion}.

\begin{lemma}[Lemma~1 of~\cite{Bacciotti1999}]
Under Assumption~\ref{ass:Vregularity}, given any Carath\'{e}odory solution $x:[0,\tau)\to\mathbb{R}^n$ to~\eqref{eq:differentialinclusion}, $\mathcal{E}(t)\triangleq V(x(t))$ is absolutely continuous and $\dot{\mathcal{E}}(t) = \frac{d}{dt}V(x(t))\in\dot{V}(x(t))$ a.e. in $t\in [0,\tau)$.
\label{lemma:dVdtisinVdot}
\end{lemma}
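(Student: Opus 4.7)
The plan is to separate the statement into two parts: (i) absolute continuity of $\mathcal{E}(t)\triangleq V(x(t))$ on every compact subinterval of $[0,\tau)$, and (ii) the pointwise a.e.\ membership $\dot{\mathcal{E}}(t)\in\dot V(x(t))$.

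For (i), I would invoke the classical fact that the composition of a locally Lipschitz function with an absolutely continuous one is absolutely continuous. Concretely, for any compact $[0,T]\subset[0,\tau)$, continuity of $x(\cdot)$ makes $x([0,T])$ compact, and Assumption~\ref{ass:Vregularity} then yields a constant $L_T$ with $|V(y)-V(y')|\leq L_T\|y-y'\|$ on a neighborhood of $x([0,T])$. The $\varepsilon$--$\delta$ definition of absolute continuity for $x$ transfers to $\mathcal{E}$ by shrinking the tolerance by $L_T$.

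For (ii), the heart of the argument is a chain rule for regular locally Lipschitz functions along absolutely continuous paths. By (i), $\dot{\mathcal{E}}(t)$ exists a.e.; by absolute continuity of $x$, so does $\dot{x}(t)$, with $\dot{x}(t)\in K(x(t))$ a.e.\ by the definition of a Carathéodory solution. At every such $t$, I would establish the identity
\[
\dot{\mathcal{E}}(t)=p\cdot\dot{x}(t)\qquad\text{for every }p\in\partial V(x(t)).
\]
This combines two facts from Clarke's nonsmooth calculus: the duality $V^\circ(x;v)=\max_{p\in\partial V(x)}p\cdot v$, valid for any locally Lipschitz $V$, and the equality $V'(x;v)=V^\circ(x;v)$ enforced by regularity. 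Applying these to both $\pm\dot{x}(t)$, and observing that at a point where $\mathcal{E}$ is differentiable the two-sided incremental ratio along the trajectory must match $V'(x(t);\dot{x}(t))$ up to an error controlled by the Lipschitz bound on $V$ times $\|x(t+h)-x(t)-h\dot{x}(t)\|=o(h)$, I would conclude $\max_{p}p\cdot\dot{x}(t)=\min_{p}p\cdot\dot{x}(t)$. Hence $p\cdot\dot{x}(t)$ is constant across $p\in\partial V(x(t))$ and equals $\dot{\mathcal{E}}(t)$.

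Feeding this into the definition $\dot V(x)=\{a\in\mathbb{R}:\exists v\in K(x),\,a=p\cdot v\ \forall p\in\partial V(x)\}$, with $v=\dot{x}(t)\in K(x(t))$ and $a=\dot{\mathcal{E}}(t)$, yields the claim. I expect the main obstacle to be the chain-rule step. A naïve argument appealing to Rademacher's theorem (``$V$ is differentiable a.e.\ on $\mathbb{R}^n$'') is insufficient, because the trajectory $x(\cdot)$ can, a priori, spend positive time in the zero-measure set where $V$ fails to be differentiable. Regularity of $V$ is precisely the ingredient that repairs this: it replaces the missing pointwise gradient with a two-sided directional derivative that controls the entire Clarke subdifferential uniformly along $\dot{x}(t)$.
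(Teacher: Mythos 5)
Your argument is correct and is essentially the standard proof of this result: the paper itself gives no proof, citing it directly as Lemma~1 of Bacciotti--Ceragioli (1999), and your two-step argument (Lipschitz-of-absolutely-continuous composition, then the chain rule via $V^\circ(x;v)=\max_{p\in\partial V(x)}p\cdot v$ applied to $\pm\dot{x}(t)$ to force $\max_p p\cdot\dot{x}(t)=\min_p p\cdot\dot{x}(t)=\dot{\mathcal{E}}(t)$) is precisely the proof given in that reference (itself adapted from Shevitz--Paden). Your closing remark correctly identifies why regularity, rather than Rademacher's theorem alone, is the essential hypothesis.
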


We are now ready to state and prove our Lyapunov-based sufficient condition for finite-time stability of differential inclusions.

\begin{theorem}
\label{thm:FTS}
Suppose that Assumptions~\ref{ass:K} and~\ref{ass:Vregularity} hold for some set-valued map $K:\mathbb{R}^n\rightrightarrows\mathbb{R}^n$ and some function $V:\mathcal{D}\to\mathbb{R}$, where $\mathcal{D}\subseteq\mathbb{R}^n$ is an open and positively invariant neighborhood of a point $x^\star\in\mathbb{R}^n$. Suppose that $V$ is positive definite w.r.t. $x^\star$ and that there exist constants $c>0$ and $\alpha < 1$ such that
\begin{equation}
\sup \dot{V}(x) \leq -c\,V(x)^\alpha
\label{eq:Vdotineq}
\end{equation}
a.e. in $x\in\mathcal{D}$. Then,~\eqref{eq:differentialinclusion} is finite-time stable at $x^\star$, with settling time upper bounded by
\begin{equation}
t^\star \leq \frac{V(x_0)^{1-\alpha}}{c(1-\alpha)},
\label{eq:settlingtimebound}
\end{equation}
where $x(0)=x_0$. In particular, any Carath\'{e}odory solution $x(\cdot)$ with $x(0)=x_0\in\mathcal{D}$ will converge in finite time to $x^\star$ under the upper bound~\eqref{eq:settlingtimebound}. Furthermore, if $\mathcal{D}=\mathbb{R}^n$, then~\eqref{eq:differentialinclusion} is globally finite-time stable. Finally, if $\dot{V}(x)$ is a singleton a.e. in $x\in\mathcal{D}$ and~\eqref{eq:Vdotineq} is exact, then so is~\eqref{eq:settlingtimebound}.
\end{theorem}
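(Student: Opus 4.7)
The plan is to reduce the set-valued hypothesis to the scalar differential inequality already handled by Lemma~\ref{lemma:Edotineq}, using the chain rule of Lemma~\ref{lemma:dVdtisinVdot} as the bridge. Fix an arbitrary maximal Carath\'{e}odory solution $x:[0,\tau)\to\mathbb{R}^n$ of~\eqref{eq:differentialinclusion} with $x_0\in\mathcal{D}\setminus\{x^\star\}$. Positive invariance of $\mathcal{D}$ keeps $x(t)\in\mathcal{D}$ throughout its domain of definition, so the hypothesis~\eqref{eq:Vdotineq} applies pointwise along the trajectory.

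Setting $\mathcal{E}(t)\triangleq V(x(t))$, Lemma~\ref{lemma:dVdtisinVdot} (applicable thanks to Assumption~\ref{ass:Vregularity}) asserts that $\mathcal{E}(\cdot)$ is absolutely continuous and $\dot{\mathcal{E}}(t)\in \dot{V}(x(t))$ for a.e.\ $t$. Combining with~\eqref{eq:Vdotineq} yields
\begin{equation*}
    \dot{\mathcal{E}}(t)\leq \sup\dot{V}(x(t))\leq -c\,\mathcal{E}(t)^\alpha,\quad\text{a.e.\ }t,
\end{equation*}
and Lemma~\ref{lemma:Edotineq} then produces a finite $t^\star \leq V(x_0)^{1-\alpha}/[c(1-\alpha)]$ with $\mathcal{E}(t^\star)=0$. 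Because $V$ is positive definite w.r.t.\ $x^\star$ on $\mathcal{D}$, the only zero of $V$ inside $\mathcal{D}$ is $x^\star$ itself, hence $x(t^\star)=x^\star$, which gives the finite-time convergence piece together with the settling-time bound~\eqref{eq:settlingtimebound}.

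For Lyapunov stability at $x^\star$ (needed to upgrade finite-time convergence to finite-time stability in the sense given in Section~\ref{subsec:FTSsemiS}), I would use the standard sublevel-set argument: for any $\varepsilon>0$ small enough that $\overline{B_\varepsilon(x^\star)}\subset\mathcal{D}$, continuity and positive definiteness of $V$ furnish a $\delta>0$ such that $\|x_0-x^\star\|<\delta$ implies $V(x_0)<m\triangleq \min_{\|y-x^\star\|=\varepsilon}V(y)$; since $\dot{\mathcal{E}}\leq -c\mathcal{E}^\alpha\leq 0$ forces $\mathcal{E}$ to be non-increasing, the trajectory cannot reach $\partial B_\varepsilon(x^\star)$, so it stays within $B_\varepsilon(x^\star)$ until it hits $x^\star$ at $t^\star$. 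The global case $\mathcal{D}=\mathbb{R}^n$ is immediate, as $x_0$ is arbitrary and the bound is finite for every initial condition.

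The exactness claim is the easiest piece: when $\dot{V}(x)$ is a singleton a.e.\ and~\eqref{eq:Vdotineq} holds with equality, Lemma~\ref{lemma:dVdtisinVdot} forces $\dot{\mathcal{E}}(t)=-c\,\mathcal{E}(t)^\alpha$ a.e., and the exactness clause of Lemma~\ref{lemma:Edotineq} delivers equality in~\eqref{eq:settlingtimebound}. The only place I anticipate having to be careful is confirming that the trajectory cannot escape $\mathcal{D}$ before the settling time, but this is precisely what positive invariance of $\mathcal{D}$ is assumed for; the rest is routine assembly of Lemmas~\ref{lemma:Edotineq} and~\ref{lemma:dVdtisinVdot} with the positive-definiteness of $V$.
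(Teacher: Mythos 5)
Your proposal is correct and follows essentially the same route as the paper: compose Lemma~\ref{lemma:dVdtisinVdot} with the hypothesis~\eqref{eq:Vdotineq} to obtain the scalar inequality $\dot{\mathcal{E}}(t)\leq -c\,\mathcal{E}(t)^\alpha$ along any Carath\'{e}odory solution, then invoke Lemma~\ref{lemma:Edotineq} and positive definiteness of $V$ to conclude $x(t^\star)=x^\star$. The only difference is cosmetic: the paper delegates Lyapunov stability to Proposition~1 of~\cite{Bacciotti1999}, whereas you reprove it with the standard sublevel-set argument, and you spell out the exactness clause (singleton $\dot V$ plus equality in~\eqref{eq:Vdotineq}) which the paper leaves implicit in its appeal to Lemma~\ref{lemma:Edotineq}.
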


\begin{proof}
Note that, by Proposition~1 of~\cite{Bacciotti1999}, we know that~\eqref{eq:differentialinclusion} is Lyapunov stable at $x^\star$. All that remains to show is local convergence towards $x^\star$ (which must be an equilibrium) in finite time. Indeed, given any maximal solution $x:[0,t^\star)\to\mathbb{R}^n$ to~\eqref{eq:differentialinclusion} with $x(0)=x_0\neq x^\star$, we know by Lemma~\ref{lemma:dVdtisinVdot}, that $\mathcal{E}(t) = V(x(t))$ is absolutely continuous with $\dot{\mathcal{E}}(t)\in\dot{V}(x(t))$ a.e. in $t\in [0,t^\star]$. Therefore, we have
\begin{equation}
    \dot{\mathcal{E}}(t) \leq \sup\dot{V}(x(t)) \leq -c\,V(x(t))^\alpha = -c\,\mathcal{E}(t)^\alpha
\end{equation}
a.e. in $t\in [0,t^\star]$. Since $\mathcal{E}(0)=V(x_0) > 0$, given that $x_0\neq x^\star$, the result then follows by invoking Lemma~\ref{lemma:Edotineq} and noting that $\mathcal{E}(t^\star) = 0 \iff V(t^\star,x(t^\star)) = 0 \iff x(t^\star) = x^\star$.
\end{proof}

\section{Proof of the main result: Theorem 1}
\label{subsec:proofthm1}
Let us focus first on~\eqref{eq:GNF11}, since the proof for~\eqref{eq:GNF22} follows similar steps. The idea is to show that it is finite-time stable at $x^\star$, with the inequality in Theorem~\ref{thm:FTS} holding exactly for $V(x) = \|\nabla f(x)\|^2$. First, notice that $F(x)\triangleq -c\|\nabla f(x)\|^p\frac{[\nabla^2 f(x)]^r\nabla f(x)}{\nabla f(x)^\top[\nabla^2 f(x)]^{r+1}\nabla f(x)}$ is continuous near (but not at) $x=x^\star$, and undefined at $x=x^\star$ itself. Furthermore, we have 
\begin{subequations}
\begin{align}
    \|F(x)\| &= c\|\nabla f(x)\|^p\frac{\|[\nabla^2 f(x)]^r\nabla f(x)\|}{\nabla f(x)^\top[\nabla^2 f(x)]^{r+1}\nabla f(x)}\\
    &\leq c\|\nabla f(x)\|^p\frac{\lambda_\textnormal{max}(\nabla^2 f(x))^r\|\nabla f(x)\|}{\lambda_\textnormal{min}(\nabla^2 f(x))^{r+1}\|\nabla f(x)\|^2}\\
    &\leq c\frac{\lambda_\textnormal{max}(\nabla^2 f(x))^r}{\lambda_\textnormal{min}(\nabla^2 f(x))^{r+1}}\|\nabla f(x)\|^{p-1},
\end{align}
\end{subequations}
with $p-1\geq 0$ and $\lambda_\textnormal{min}[\nabla^2 f(x)] \geq m > 0$ everywhere near $x=x^\star$ for some $m>0$ ($m$-strong convexity). Therefore, $F$ is Lebesgue integrable (and thus measurable) and locally essentially bounded, which means that Assumption~\ref{ass:existenceFilippov} is satisfied.

Set $V(x) \triangleq \|\nabla f(x)\|^2$, defined over $\mathcal{D}$. If $\mathcal{D}$ is not positively invariant w.r.t.~\eqref{eq:GNF11}, we can always replace it by a smaller open subset that is, \emph{e.g.} a sufficiently small strict sublevel set contained within~$\mathcal{D}$. Clearly, $V$ is continuously differentiable, thus satisfying Assumption~\ref{ass:Vregularity}. Furthermore, it is positive definite w.r.t. $x^\star$ and, given $x\in\mathcal{D}\setminus\{x^\star\}$, we have
\begin{subequations}
\begin{align}
    \sup\dot{V}(x) &= \sup\{a\in\mathbb{R}: \exists v\in\mathcal{K}[F](x) \textnormal{ s.t. } a = p\cdot v, \forall p\in\partial V(x)\}\\
    &= \sup\left\{\left(2\nabla^2 f(x)\nabla f(x)\right)\cdot \left(-c\|\nabla f(x)\|^p \frac{[\nabla^2 f(x)]^r\nabla f(x)}{\nabla f(x)^\top[\nabla^2 f(x)]^{r+1}\nabla f(x)}\right) \right\}\\
    &= -2c\|\nabla f(x)\|^p\\
    &= -(2c)\,V(x)^\frac{p}{2}
\end{align}
\end{subequations}
with $\frac{p}{2}<1$. Furthermore, $\dot{V}(x^\star) = \{\nabla V(x^\star)\cdot v: v\in\mathcal{K}[F](x^\star)\} = \{0\}$ since $\nabla V(x^\star) = 2\nabla^2 f(x^\star)\nabla f(x^\star) = 0$. 
The result thus follows by invoking Theorem~\ref{thm:FTS}.

We now proceed to establish finite-time stability of~\eqref{eq:GNF22} at $x^\star$. Like before, we notice that $F(x) = -c\|\nabla f(x)\|_1^{p-1}x\frac{[\nabla^2 f(x)]^r\sign(\nabla f(x))}{\sign(\nabla f(x))^\top[\nabla^2 f(x)]^{r+1}\sign(\nabla f(x))}$ is continuous near, but not at, $x=x^\star$. Furthermore, notice that
\begin{subequations}
\begin{align}
    \|F(x)\| &= c\|\nabla f(x)\|_1^{p-1}\frac{\|[\nabla^2 f(x)]^r\sign(\nabla f(x))\|}{\sign(\nabla f(x))^\top[\nabla^2 f(x)]^{r+1}\sign(\nabla f(x))}\\
    &\leq c\|\nabla f(x)\|_1^{p-1}\frac{\lambda_\textnormal{max}(\nabla^2 f(x))^r\|\sign(\nabla f(x))\|}{\lambda_\textnormal{min}(\nabla^2 f(x))^{r+1}\|\sign(\nabla f(x))\|^2}\\
    &\leq c\frac{\lambda_\textnormal{max}(\nabla^2 f(x))^r}{\lambda_\textnormal{min}(\nabla^2 f(x))^{r+1}}\frac{\|\nabla f(x)\|_1^{p-1}}{\|\sign(\nabla f(x))\|}\\
    &\leq c\frac{\lambda_\textnormal{max}(\nabla^2 f(x))^r}{\lambda_\textnormal{min}(\nabla^2 f(x))^{r+1}}\frac{\|\nabla f(x)\|_1^{p-1}}{\|\sign(\nabla f(x))\|_1/\sqrt{n}}\\
    &\leq \frac{c}{\sqrt{n}}\frac{\lambda_\textnormal{max}(\nabla^2 f(x))^r}{\lambda_\textnormal{min}(\nabla^2 f(x))^{r+1}}\|\nabla f(x)\|_1^{p-1}\\
\end{align}
\end{subequations}
for every $x\in\mathcal{D}\setminus\mathcal{N}$, with $\mathcal{N}=\bigcup_{i=1}^n\left\{x\in\mathcal{D}:\frac{\partial f}{\partial x_i}(x) = 0\right\}$. Since $\mathcal{N}$ is a zero-measure set due to being a finite union of hypersurfaces in $\mathbb{R}^n$ ($n\geq 1$), and also recalling that $f$ is strongly convex near $x^\star$ and $p-1\geq 0$, it follows that $F$ is Lebesgue integrable and locally essentially bounded. Therefore, Assumption~\ref{ass:existenceFilippov} is once again satisfied.

Now consider the candidate Lyapunov function $V(x) = \|\nabla f(x)\|_1$, defined over $\mathcal{D}$. Clearly, $V$ is not continuously differentiable this time. However, it is still satisfies Assumption~\ref{ass:existenceFilippov} due to being a.e. differentiable. In particular, we have $\partial V(x) = \{\nabla^2 f(x)\sign(\nabla f(x))\}$ for every $x\in\mathcal{D}\setminus\mathcal{N}$. In other words, we have $\partial V(x) = \{\nabla^2 f(x)\sign(\nabla f(x))\}$ a.e. in $x\in\mathcal{D}$. 

Given $x\in\mathcal{D}\setminus\mathcal{N}$, we thus have
\begin{subequations}
\begin{align}
    \sup\dot{V}(x) &= \sup\{a\in\mathbb{R}:\exists v\in\mathcal{K}[F](x) \textnormal{ s.t. } a=p\cdot v, \forall p\in\partial V(x)\}\\
    &= \Big(\nabla^2 f(x)\sign(\nabla f(x))\Big)\cdot \left(-\frac{c\|\nabla f(x)\|_1^{p-1}[\nabla^2 f(x)]^r\sign(\nabla f(x))}{\sign(\nabla f(x))^\top[\nabla^2 f(x)]^{r+1}\sign(\nabla f(x))}\right)\\
    &= -c\|\nabla f(x)\|_1^{p-1}\\
    &= -c\, V(x)^{p-1},
\end{align}
\end{subequations}
with $p-1 < 1$. The result once again follows by invoking Theorem~\ref{thm:FTS}.





\end{document}